\numberwithin{equation}{section}
\newtheorem{theorem}{Theorem}[section]
\newtheorem{exam}[theorem]{Example}
\newcommand{\sq}{\sqrt[n]{a_n}}
\def\les{\leqslant}
\def\ges{\geqslant}
\title{\textbf{Log-behavior of the root sequences of P-recursive sequences}}
\author{Qing-hu Hou and Zhongjie Li}
\affil{School of Mathematics \break Tianjin University \break Tianjin 300072, China \break\texttt{qh\_hou@tju.edu.cn, lizhongjie@tju.edu.cn}}
\date{}
\begin{document}
\maketitle
\begin{abstract}
In recent years, Sun has proposed numerous conjectures regarding the log-concavity of root sequences $\{\sq\}_{n\geqslant 1}$. We establish criteria for the asymptotic log-concavity of $\{\sq\}_{n\geqslant 1}$ and the asymptotic ratio log-convexity of $\{\sq\}_{n\geqslant 1}$ for $P$-recursive sequences $\{a_n\}_{n\geqslant{0}}$. Additionally, by the aid of symbolic computation, we present a systematic approach to determine the explicit integer $N$ such that the sequence $\{\sq\}_{n\geqslant{N}}$ is log-concave and the sequence $\{\sq\}_{n\geqslant N}$ is ratio log-convex.
\end{abstract}

\noindent{\textbf{Keywords}: log-concavity, ratio log-convexity, $P$-recursive sequences. }

\section{Introduction}
A sequence $\{a_{n}\}_{n\geqslant 1}$  is called \textit{log-convex} ({\it log-concave}, respectively) if the inequality $a_{n}^2  \leqslant a_{n+1} a_{n-1}$ ($a_{n}^2  \ges a_{n+1} a_{n-1}$, respectively) holds for all $n \ges 2$. For a non-negative sequence $\{a_n\}_{n \ges 1}$, we define its \emph{root sequence} as $\{\sqrt[n]{a_n}\}_{n \ges 1}$. Recently, Sun~\cite{sun2013conjectures, sun2021new} proposed several conjectures regarding the log-concavity of the root sequences of combinatorial sequences. Some of these conjectures have been confirmed, as shown in \cite{hou2012monotonicity, sun2018ratio, sun2020log, zhang2021proof}. In particular, Xia~\cite{xia2018log} developed a systematic method to prove the log-concavity of $\{\sq\}_{n\geqslant 1}$ when $a_n$ satisfies a second-order linear reucrrence.

We consider the general case where $\{a_n\}_{n \ges 1}$ is a \emph{P-recursive sequence}. Recall that a $P$-recursive sequence $\{a_n\}_{n \ges 1}$  of order $d$ satisfies a recurrence relation of the form
\[
p_0(n) a_n + p_1(n) a_{n+1} + \cdots + p_d(n) a_{n+d} = 0, \quad \forall\, n \ges 1,
\]
where $p_{i}(n)$ are polynomials in $n$. Wimp and Zeilberger \cite{wimp1985resurrecting} (see also \cite[Sec. VIII.7]{Ana}) showed that a $P$-recursive sequence $\{a_{n}\}_{n\ges 1}$ is asymptotically equal to a linear combination of terms in the form of
\begin{equation}\label{eq-Qs}
e^{Q(\rho,n)}s(\rho,n), 
\end{equation}
where
\begin{align}
Q(\rho,n) & = \mu_{0}n\log{n} + \sum_{j=1}^{\rho}\mu_{j}n^{j/\rho}, \label{eq-Q}\\
s(\rho,n) & = n^r\sum_{j=0}^{t-1}(\log{n})^j\sum_{s=0}^{\infty}b_{sj}n^{-s/\rho}, \label{eq-s}
\end{align}
with $\rho,\ t$ being positive integers and $\mu_{j},\ r,\ b_{sj}$ being complex numbers. We focus on the case where $a_n$ asymptotically takes the form of \eqref{eq-Qs} and $t=1$ in \eqref{eq-s}, which encompasses the majority of combinatorial sequences.

Based on the asymptotic expression of $a_n$, we establish a criterion for $a_n$ such that the root sequences $\{\sqrt[n]{a_n}\}$ is {\it asymptotically log-concave}, i.e., 
\[
\sqrt[n]{a_n}^2 \ges \sqrt[n-1]{a_{n-1}} \sqrt[n+1]{a_{n+1}}
\] 
holds for all sufficiently large $n$. We define a sequence $\{a_n\}_{n \ges 1}$ to be {\it ratio log-convex} if $\{a_{n+1}/a_n\}_{n \ges 1}$ is log-convex. By a similar discussion, we provide a criterion for $a_n$ such that the root sequence is asymptotically ratio log-convex.

Our next objective is to identify a specific integer $N$ that guarantees the log-concavity of the sequence $\{\sqrt[n]{a_n}\}_{n \ges N}$. We solve the problem by establishing bounds for both $r_n=a_{n+1}/a_n$ and $a_n$. Notably,  
Hou and Zhang \cite{hou2019asymptotic} have developed an algorithm to compute $N$ and provide bounds for the ratio $r_n$ when $n \ges N$. Combing with the asymptotic expression of $a_n$, we are enable to give an upper bound of $a_n$ as intended. Then we can use \texttt{Maple} or \texttt{Mathematica} to find the derivative and limiting value in order to prove the constant positivity or negativity of the rational function.

\indent This paper is organized as follows. In Section 2, by studying the asymptotic expressions of $P$-recursive sequences, we give a criterion of the asymptotic log-concavity for the sequence $\{\sq\}_{n \geqslant 1}$ and the asymptotic ratio log-convexity for $\{\sq\}_{n \geqslant 1}$. In Section 3, by studying the log-behavior of $\{\sq\}_{n \geqslant 1}$ and the monotonicity of $\left\{\frac{\sqrt[n]{a_{n+1}}}{\sq}\right\}_{n \geqslant 1}$, we extend the result of~\cite{hou2019asymptotic} to give a systematic method to find $N$ such that the sequence $\{\sq\}_{n\geqslant{N}}$ is log-concave and $\{\sq\}_{n \geqslant N}$ is ratio log-convex.

\section{The asymptotic log-behavior of $\{\sq\}_{n \ges 1}$}
In this section, we consider that the asymptotic log-concavity and the asymptotic ratio log-convexity. 

Firstly, we give a sufficient condition for the log-concavity of $\{\sq\}_{n \geqslant 1}.$
\begin{theorem}\label{t1}
Suppose that $\{a_n\}_{n\geqslant{1}}$ is a P-recursive sequence whose asymptotic expression is
\begin{align}
a_{n}\sim  e^{Q(\rho,n)}\cdot n^{r}\left(\sum_{s=0}^{M}b_{s}n^{-s/\rho} + o(n^{-M/\rho})\right), \label{eq-1}   
\end{align}
where
\[
Q(\rho,n) = \mu_{0}n\log{n} + \sum_{j=1}^{\rho}\mu_{j}n^{j/\rho},
\]
with $\rho, M$ being positive integers and $\mu_{j}, r, b_{s}$ being real numbers such that 
\[
b_{0} >0 \quad \mbox{and} \quad M/\rho > 2.
\]
Then for the following three cases, the sequence $\{\sqrt[n]{a_n}\}_{n\geqslant{1}}$ is asymptotically log-concave.
\begin{itemize}
	\item[{\rm (1)}]
	$\mu_{0}>{0}$.
	\item[{\rm (2)}]
	 $\mu_{0} = 0$ and $ \mu_{k}<{0},$ where $k = \max\limits_{1\leqslant{j}\leqslant{\rho-1}} \{j \colon \mu_{j}\ne{0}\}$.
	 \item[{\rm (3)}]
	 $\mu_{0} = 0,\ \mu_{j} = 0$ for $1\leqslant{j}\leqslant{\rho-1}$, and $r<{0}$. 
\end{itemize}

\end{theorem}

\begin{proof}
By definition, our objective is to establish the criteria for $a_n$ such that 
\[\sqrt[n+1]{a_{n+1}}\cdot\sqrt[n-1]{a_{n-1}}\leqslant{(\sq)^2}.\]
Equivalently, we need to ensure that
\begin{align}
\Delta = \dfrac{\log {a_{n+1}}}{n+1} + \dfrac{\log {a_{n-1}}}{n-1} - \dfrac{2 \log {a_{n}}}{n} \leqslant{0}. \label{2}
\end{align}

From the asymptotic formula (\ref{eq-1}) for $a_n$, we obtain an asymptotic formula for $(\log {a_{n}})/n$:
\begin{align*}
\dfrac{\log {a_{n}}}{n} &\sim \mu_{0}\log{n} +\sum\limits_{j=1}^{\rho}
\dfrac{\mu_{j}}{n^{1-j/\rho}} +\dfrac{r\log{n}}{n} + \frac{\log \left( \sum\limits_{s=0}^{M} \frac{b_{s}}{n^{s/\rho}}\right)}{n} +o\left(\dfrac{1}{n^{1+ M/\rho}}\right)\\
&= \mu_{0}\log{n} +\sum\limits_{j=1}^{\rho}
\dfrac{\mu_{j}}{n^{1-j/\rho}} +\dfrac{r\log{n}}{n} +\sum\limits_{s=0}^{M} \frac{\tilde{b}_{s}}{n^{1+s/\rho}} +o\left(\dfrac{1}{n^{1+ M/\rho}}\right),
\end{align*}
where $\tilde{b}_0 =\log b_0$ and $\tilde{b}_i$ is a polynomial in $b_1/b_0, \ldots, b_i/b_0$.

Now we will estimate $f_{n+1}+f_{n-1}-2f_{n}$ for $f_n$ representing one of the four terms in the above equation.

For $f_{n}=\mu_{0}\log{n}$, we have
\begin{align}
f_{n+1}+f_{n-1}-2f_{n} &=\mu_{0}\left(\log({n+1})+\log({n-1})-2\log{n}\right)\notag\\
&= \mu_{0}\log\left({1-\frac{1}{n^2}}\right)\notag\\
&= -\frac{\mu_{0}}{n^2}+\dots+o\left(\frac{1}{n^{1+M}}\right).\label{3}
\end{align}
For $f_{n}=\sum\limits_{j=1}^{\rho}
\dfrac{\mu_{j}}{n^{1-j/\rho}}$, we have
\begin{align}
f_{n+1}+f_{n-1}-2f_{n} &=\sum_{j=1}^{\rho}\mu_{j}\left(\frac{1}{(n+1)^{1-j/\rho}}+\frac{1}{(n-1)^{1-j/\rho}}-\frac{2}{n^{1-j/\rho}}\right)\notag\\
&=\sum_{j=1}^{\rho}\frac{\mu_{j}}{n^{1-j/\rho}}\left(\frac{1}{(1+\frac{1}{n})^{1-j/\rho}}+\frac{1}{(1-\frac{1}{n})^{1-j/\rho}}-2\right)\notag\\
&=\sum_{j=1}^{\rho}\mu_{j}\left(\frac{(j/\rho-1)(j/\rho-2)}{n^{3-j/\rho}}+\dots+o\left(\frac{1}{n^{1+M}}\right)\right).\label{4}
\end{align}
For $f_{n}=\dfrac{r\log{n}}{n}$, we have
\begin{align}
f_{n+1}+f_{n-1}-2f_{n}&=r\left(\frac{\log{(n+1)}}{n+1}+\frac{\log{(n-1)}}{n-1}-\frac{2\log{n}}{n}\right)\notag\\
&=r\left(\frac{n(n-1)\log{(1+\frac{1}{n})}+n(n+1)\log{(1-\frac{1}{n})}+2\log{n}}{n(n-1)(n+1)}\right)\notag\\
&=r\left(\frac{2\log{n}-3}{n^3}+\dots+o\left(\frac{1}{n^{1+M}}\right)\right).\label{5}
\end{align}
Finally, for $f_{n}=\sum\limits_{s=0}^M\dfrac{\tilde{b}_s}{n^{1+s/\rho}}$, we have
\begin{align}
f_{n+1}+f_{n-1}-2f_{n}&=\sum\limits_{s=0}^M \tilde{b}_{s}\left(\frac{1}{(n+1)^{1+s/\rho}}+\frac{1}{(n-1)^{1+s/\rho}}-\frac{2}{n^{1+s/\rho}}\right)\notag\\
&=\sum\limits_{s=0}^M \frac{\tilde{b}_{s}}{n^{1+s/ \rho}}\left(\frac{1}{(1+\frac{1}{n})^{1+s/\rho}}+\frac{1}{(1-\frac{1}{n})^{1+s/\rho}}-2\right)\notag\\
&=\sum\limits_{s=0}^M \tilde{b}_{s}\left(\frac{(1+s/\rho)(2+s/\rho)}{n^{3+s/\rho}}+\dots+o\left(\frac{1}{n^{1+M/\rho}}\right)\right).\label{6}
\end{align}

Combining (\ref{3})--(\ref{6}) together,  we arrive at
\begin{align}
\Delta = & -\frac{\mu_{0}}{n^2}+\sum_{j=1}^{\rho-1}\frac{\mu_{j}(j/\rho-1)(j/\rho-2)}{n^{3-j/\rho}}+\frac{r(2 \log{n}-3)}{n^3} \nonumber \\
& +\sum\limits_{s=0}^M\frac{\tilde{b}_{s}(1+s/\rho)(2+s/\rho)}{n^{3+s/\rho}} + \dots + o\left(\frac{1}{n^{1+M/\rho}}\right). \label{delta}
\end{align}
It is straightforward to see that when the parameters satisfy one of (1)--(3), the coefficient of the dominant term in the asymptotic expression of $\Delta$ is negative.
\end{proof}

{\noindent \it Remark.} We can further discuss the case where $\mu_{0} = 0$, $\mu_{j} = 0$ for $1\leqslant{j}\leqslant{\rho-1}$, and $r=0$. In this case,
the asymptotic log-concavity depends on whether $b_0$ is less than $1$. However, determining the explicit value of $b_0$ is non-trivial since it depends on both the recursion and initial values of the sequence $a_n$.

Note that when $a_n$ has the asymptotic form \eqref{t1}, $a_n/n^\alpha$ will have the asymptotic form 
\[
a_{n}\sim e^{Q(\rho,n)}\cdot n^{r-\alpha}\left(\sum_{s=0}^{M}b_{s}n^{-s/\rho} + o(n^{-M/\rho})\right). 
\]
Therefore, the above criteria applies to the sequence $\{\sqrt[n]{a_n/n^\alpha}\}_{n \ges 1}$ by replacing $r$ with $r-\alpha$.

With a similar discussion, we find a criterion which guarantees the asymptotic ratio log-convexity of the root sequence.
\begin{theorem}\label{t3}
Suppose that $\{a_n\}_{n\geqslant{1}}$ is a P-recursive sequence whose asymptotic expression is
\begin{align}
	a_{n}\sim  e^{Q(\rho,n)}\cdot n^{r}\left(\sum_{s=0}^{M}b_{s}n^{-s/\rho} + o(n^{-M/\rho})\right), \label{eq-1}   
\end{align}
where
\[
Q(\rho,n) = \mu_{0}n\log{n} + \sum_{j=1}^{\rho}\mu_{j}n^{j/\rho},
\]
with $\rho, M$ being positive integers and $\mu_{j}, r, b_{s}$ being real numbers such that 
\[
b_{0} >0 \quad \mbox{and} \quad M/\rho > 3.
\]
Then for the following three cases, the sequence $\{\sqrt[n]{a_n}\}_{n\geqslant{1}}$ is asymptotically ratio log-convex.
\begin{itemize}
	\item[{\rm (1)}]
	$\mu_{0}>{0}$.
	\item[{\rm (2)}]
	$\mu_{0} = 0$ and $ \mu_{k}<{0},$ where $k = \max\limits_{1\leqslant{j}\leqslant{\rho-1}} \{j \colon \mu_{j}\ne{0}\}$.
	\item[{\rm (3)}]
	$\mu_{0} = 0,\ \mu_{j} = 0$ for $1\leqslant{j}\leqslant{\rho-1}$, and $r<{0}$. 
\end{itemize}
\end{theorem}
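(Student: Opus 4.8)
The plan is to mirror the proof of Theorem~\ref{t1}, replacing the second difference by a third difference. First I would translate the ratio log-convexity of the root sequence into a statement about $L_n := (\log a_n)/n$. Writing $c_n = \sq$ so that $\log c_n = L_n$, the ratio sequence $\{c_{n+1}/c_n\}$ is log-convex precisely when $2(L_{n+1}-L_n)\les (L_n-L_{n-1})+(L_{n+2}-L_{n+1})$, equivalently when
\[
\nabla := L_{n+2} - 3L_{n+1} + 3L_n - L_{n-1} \ges 0.
\]
Thus asymptotic ratio log-convexity of $\{\sq\}$ is exactly the positivity of this third difference of $L_n$ for all sufficiently large $n$.

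Next I would reuse the asymptotic expansion of $L_n$ established in the proof of Theorem~\ref{t1},
\[
L_n \sim \mu_0\log n + \sum_{j=1}^\rho \frac{\mu_j}{n^{1-j/\rho}} + \frac{r\log n}{n} + \sum_{s=0}^M \frac{\tilde{b}_s}{n^{1+s/\rho}} + o\!\left(\frac{1}{n^{1+M/\rho}}\right),
\]
and compute $\nabla$ term by term. Since a smooth $f$ obeys $f(n+2)-3f(n+1)+3f(n)-f(n-1)\sim f'''(n)$, each power $n^{-\beta}$ contributes $-\beta(\beta+1)(\beta+2)n^{-\beta-3}$ to leading order, raising the exponent by $3$ rather than by $2$ as in the log-concave case. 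Carrying this out, I expect
\[
\nabla \sim \frac{2\mu_0}{n^3} - \sum_{j=1}^{\rho-1}\frac{\mu_j(1-\tfrac{j}{\rho})(2-\tfrac{j}{\rho})(3-\tfrac{j}{\rho})}{n^{4-j/\rho}} - \frac{6r\log n}{n^4} - \sum_{s=0}^M \frac{\tilde{b}_s(1+\tfrac{s}{\rho})(2+\tfrac{s}{\rho})(3+\tfrac{s}{\rho})}{n^{4+s/\rho}} + \cdots,
\]
where the $\mu_0\log n$ and $r\log n/n$ pieces are treated by direct differentiation, the latter yielding leading coefficient $-6r$.

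It then remains to read off the dominant term in each case and check its sign. In case (1) the term $2\mu_0/n^3$ has the smallest exponent and is positive. In case (2) the surviving contributions start at exponent $4-k/\rho\in(3,4)$, and since $1\les k\les\rho-1$ each factor $1-k/\rho,\ 2-k/\rho,\ 3-k/\rho$ is positive, so $-\mu_k(1-\tfrac{k}{\rho})(2-\tfrac{k}{\rho})(3-\tfrac{k}{\rho})>0$ because $\mu_k<0$. In case (3) the leading term is $-6r(\log n)/n^4$, which is positive since $r<0$; note it dominates the competing $\tilde{b}_0$ term of order $n^{-4}$ because of the extra $\log n$ factor. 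In all three cases the coefficient of the dominant term is positive, giving $\nabla\ges 0$ for large $n$.

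The main obstacle, exactly as in Theorem~\ref{t1}, is control of the remainder: one must verify that the third difference of the error $o(n^{-1-M/\rho})$ in $L_n$, together with the higher-order corrections suppressed in the ``$\cdots$'' above, is of strictly smaller order than the identified dominant term. Because differencing three times pushes the dominant exponents up to $3$, $4-k/\rho$, and $4$ (the last carrying a $\log n$), negligibility of the remainder requires $1+M/\rho>4$. This is precisely the hypothesis $M/\rho>3$, the only essential strengthening relative to the log-concave setting, where $M/\rho>2$ sufficed.
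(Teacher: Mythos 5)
Your proposal is correct and follows essentially the same route as the paper: you reduce ratio log-convexity of $\{\sq\}$ to the sign of the third difference of $(\log a_n)/n$ (your $\nabla$ is exactly the negative of the paper's $\Delta'$), expand each of the four terms asymptotically exactly as in Theorem~\ref{t1}, and read off the dominant term's sign in the three cases, with the same identification of $M/\rho>3$ as the condition making the $o(n^{-1-M/\rho})$ remainder negligible against the worst-case dominant term $r(6\log n-11)/n^4$. All your coefficients ($2\mu_0/n^3$, the triple product $(1-j/\rho)(2-j/\rho)(3-j/\rho)$, and the leading $-6r\log n/n^4$) agree with the paper's expansion after accounting for the overall sign flip.
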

\begin{proof}
Notice that     
\[
\left(\frac{\sqrt[n+1]{a_{n+1}}}{\sq}\right)^2 \leqslant \frac{\sqrt[n+2]{a_{n+2}}}{\sqrt[n+1]{a_{n+1}}} \cdot \frac{\sqrt[n]{a_{n}}}{\sqrt[n-1]{a_{n-1}}}
\]
if and only if
\[
\Delta' = -\frac{\log a_{n+2}}{n+2} + 3\frac{\log a_{n+1}}{n+1}-3\frac{\log a_n}{n}+\frac{\log a_{n-1}}{n-1} \leqslant 0.
\]
By analyzing each factor in the asymptotic expression of $a_n$, as done in the proof of Theorem~\ref{t1}, we derive that
 \begin{align*}
        \Delta' = &  -\frac{2 \mu_{0}}{n^3}-\sum\limits_{j=1}^{\rho-1} \frac{\mu_{j}(j/\rho-1)(j/\rho-2)(j/\rho-3)}{n^{4-j/\rho}}+\frac{r(6\log n -11)}{n^4}\\
        &  + \sum\limits_{s=0}^{M}\frac{\tilde{b}_{s}(1+s/\rho)(2+s/\rho)(3+s/\rho)}{n^{4+s/\rho}}+\dots+o\left(\frac{1}{n^{1+M/\rho}}\right).   
    \end{align*}
The proof follows immediately.
\end{proof}

\begin{exam}
\indent Let $G_{n}$ be the number of graphs on $[n]=\{1,\ldots,n\}$ whose every component is a cycle (see \cite[Example 3.7]{liu2018new}).  We have
\[G_{n+1}=(n+1) G_{n}- \binom{n}{2} G_{n-2}\]
with initial values $G_{0}=1, G_{1}=1, G_{2}=2.$ 

It can be computed by our {\tt Mathematica} package {\tt P-rec.m} that
\[
G_{n}
\sim C \cdot e^{n\log n - n}\left(1+\frac{11}{24n}+\frac{913}{1152n^2}+\frac{829543}{414720n^3}+o\left(\frac{1}{n^3}\right)\right).
\]
Since $\mu_0=1>0$, we immediately derive that the root sequence $\{\sqrt[n]{G_n}\}_{n \geqslant 1}$ is asymptotically log-concave and ratio log-convex. Moreover, the sequence $\{\sqrt[n]{G_n/n^{\alpha}}\}_{n \geqslant 1}$ is asymptotically log-concave for any real number $\alpha$.
\end{exam}

\section{Finding the explicit $N$}
In this section, we aim to  determine a specific value for $N$ such that $\{\sq\}_{n\geqslant{N}}$ is log-concave or ratio log-convex. In their work \cite{hou2019asymptotic}, Hou and Zhang introduced an algorithm for computing an integer $N$ along with lower and upper bounds for $r_n=a_{n+1}/a_n$ when $n \geqslant N$. Our approach relies on these bounds together with an upper bound for $a_n$. It is worth to note that deriving these bounds for $r_n$ requires the sequence  $\{a_n\}_{n \ges 1}$ to be bound preserving, as defined in  \cite{hou2019asymptotic}. 

Firstly, we give a criterion on the log-concavity of the root sequences. 
\begin{theorem}\label{3t1}
Let $\{a_n\}_{n\geqslant{1}}$ be a positive sequence. Suppose we can find an upper bound $h_n$ of $a_n$, a lower bound $f_n$ and an upper bound $g_n$ of  $\frac{a_{n+1}}{a_n}$, such that
\begin{equation}\label{31}
2 \log h_n + n(n+1) \log g_{n+1} - n(n+3) \log f_n \les 0, \quad \forall n \ges N.
\end{equation}
Then the root sequence $\{\sq\}_{n\geqslant{N}}$ is log-concave.
\end{theorem}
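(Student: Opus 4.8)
The plan is to reduce the log-concavity of $\{\sq\}_{n \ges N}$ to a single pointwise inequality that only involves $a_n$ and the two consecutive ratios $r_n = a_{n+1}/a_n$ and $r_{n+1} = a_{n+2}/a_{n+1}$, and then to dominate each term using the three given bounds. Writing $c_n = \sq$, log-concavity on the range $n \ges N$ means $c_{n+1}^2 \ges c_n c_{n+2}$ for every $n \ges N$; equivalently, after taking logarithms,
\[
\frac{2\log a_{n+1}}{n+1} \ges \frac{\log a_n}{n} + \frac{\log a_{n+2}}{n+2}, \qquad \forall\, n \ges N.
\]
I deliberately place the middle index at $n+1$, so that the three quantities appearing are $a_n$, $a_{n+1}$, $a_{n+2}$, which is exactly the window covered by the bounds on $r_n$ and $r_{n+1}$.

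Next I would eliminate $a_{n+1}$ and $a_{n+2}$ in favor of $a_n$ and the ratios. Since $a_{n+1} = a_n r_n$ and $a_{n+2} = a_n r_n r_{n+1}$, we have $\log a_{n+1} = \log a_n + \log r_n$ and $\log a_{n+2} = \log a_n + \log r_n + \log r_{n+1}$. Substituting these into the displayed inequality and clearing denominators by multiplying through by $n(n+1)(n+2) > 0$, the $\log a_n$ contributions collapse to a net coefficient of $-2$, and the inequality takes the equivalent form
\[
2\log a_n - n(n+3)\log r_n + n(n+1)\log r_{n+1} \les 0, \qquad \forall\, n \ges N.
\]
This reduction is the crux of the argument and is where the only genuine computation lies: one checks that the coefficient of $\log a_n$ is $-2/(n(n+1)(n+2))$, that of $\log r_n$ is $(n+3)/((n+1)(n+2))$, and that of $\log r_{n+1}$ is $-1/(n+2)$, and then scales by $n(n+1)(n+2)$.

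Finally I would insert the three bounds, taking care with the signs. Because $\log$ is increasing and $a_n \les h_n$, $r_n \ges f_n$, $r_{n+1} \les g_{n+1}$, while the multipliers $2$, $n(n+3)$ and $n(n+1)$ are all positive, it follows that
\[
2\log a_n - n(n+3)\log r_n + n(n+1)\log r_{n+1} \les 2\log h_n - n(n+3)\log f_n + n(n+1)\log g_{n+1},
\]
and the right-hand side is $\les 0$ for every $n \ges N$ by hypothesis~\eqref{31}. Hence the equivalent inequality holds throughout the range, which is precisely the asserted log-concavity of $\{\sq\}_{n \ges N}$.

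The main obstacle here is bookkeeping rather than anything conceptual: the indexing must be chosen so that the bounds on $r_n$, $r_{n+1}$ and on $a_n$ line up exactly with the coefficients $n(n+3)$ and $n(n+1)$ in~\eqref{31}, and the monotonicity of $\log$ must be applied in the correct direction for the term carrying the negative coefficient. One small point worth flagging is that $f_n$, $g_{n+1}$ and $h_n$ must be positive for the logarithms to be defined; this is automatic since $\{a_n\}$ is positive and these quantities bound positive numbers.
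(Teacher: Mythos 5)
Your proof is correct and is essentially identical to the paper's: the paper also rewrites log-concavity with the window $a_n, a_{n+1}, a_{n+2}$, substitutes $a_{n+1}=a_n r_n$ and $a_{n+2}=a_n r_n r_{n+1}$ to reach the equivalent inequality $2\log a_n + n(n+1)\log r_{n+1} - n(n+3)\log r_n \les 0$, and then dominates term by term using $h_n$, $g_{n+1}$, $f_n$ exactly as you do.
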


\begin{proof}
From the definition, the log-concavity of $\{\sq\}_{n\geqslant{N}}$ is equivalent to
\begin{equation}\label{eq-dif}
 \dfrac{\log {a_{n+2}}}{n+2} + \dfrac{\log {a_{n}}}{n} - \dfrac{2 \log {a_{n+1}}}{n+1} \leqslant{0}. 
\end{equation}
Denote $r_n =a_{n+1}/a_n$.  
Substituting $a_{n+2}=a_n r_n r_{n+1}$ and $a_{n+1}= a_n r_n$ in \eqref{eq-dif}, we get an equivalent inequality
\[
2 \log a_n + n(n+1) \log r_{n+1} - n (n+3) \log r_n \les 0.
\]
Notice that
\begin{align*}
&	2 \log a_n + n(n+1) \log r_{n+1} - n (n+3) \log r_n \\
\les \ & 2 \log h_n + n(n+1) \log g_{n+1} - n (n+3) \log f_n.
\end{align*}
The proof follows immediately. 
\end{proof}

\noindent
{\it Remark.} Assume that $\{a_n\}_{n \ges 1}$ is a bound preserving sequence. We can use the {\tt Mathematica} package {\tt P-rec.m} to compute $N$ and bounds $f_n,\ g_n$ for $a_{n+1}/a_n$. By using the command\\
\centerline{\tt RootLog[L, n, N, ini\_val, K]}\\
where 
\[L = N^d - ( R_{0}(n) + R_{1}(n)N + \cdots + R_{d-1}(n)N^{d-1} )\]
corresponds to the recurrence relation 
\[a_{n+d} = R_{0}(n)a_n + R_{1}(n)a_{n+1} + \cdots + R_{d-1}(n)a_{n+d-1},\]
{\tt ini\_val} is a list of initial values of the sequences and {\tt K} is the number of terms of the asymptotic ratio of the sequence. Then we can get
\[ \{\{f_{n},\ g_{n}\},\ N\} \]
which indicates that 
\[ f_{n} \leq a_{n+1}/a_{n} \leq g_{n}\]
for $n \geq N.$
And for the three cases in Theorem~\ref{t1}, we can always find an upper bound for $a_n$ such that \eqref{31} holds, implying the log-concavity of the root sequence. 

In fact, we may take 
\[
h_n = e^{Q(\rho,n)} n^{r+\epsilon}
\]
for a certain $\epsilon>0$.
On one hand, by aid of the upper bound $g_n$, we can show that $h_n$ is really an upper bounds for $a_n$. On the other hand, from \eqref{delta} we see that the leading term in the asymptotic expression of
\[
2 \log h_n + n(n+1) \log r_{n+1} - n (n+3) \log r_n 
\]
is negative, which ensures that \eqref{31} holds when $f_n,g_n$ are tight enough.

The following example illustrates the computation.
\begin{exam}
Let 
\[
a_n = f_n^{(5)} = \sum_{k=0}^n \binom{n}{k}^{5}
\]
be the Franel numbers of order $5$. The root sequence of $\{a_n\}_{n \ges 1}$ is log-concave.
\end{exam}

By~\cite{perlstadt1987some} or Zeilberger's algorithm, we find that $a_n$ satisfies the recurrence:
{\small
	\begin{multline*}
	32(55n^2+33n+6)(n-1)^4 a_{n-2}-(19415n^6 - 
	27181n^5 + 7453n^4 + 3289n^3 - 956n^2 \\
	- 276n + 96) a_{n-1}
	- (1155n^6 + 693n^5 - 732n^4 - 715n^3 + 45n^2 + 
	210n + 56) a_n \\
	+ (55n^2 - 77n + 28)(n + 1)^4 a_{n+1}=0,
\end{multline*}}\\[-5pt]
with initial values 
\[
f_0^{(5)}=1,\, f_1^{(5)}=2,\, f_2^{(5)}=34,\, f_3^{(5)}=488.
\] 	

By using the package {\tt P-rec.m}, we obtain the asymptotic expansion of $a_n$ 
\[
a_n \sim C\cdot 32^{n} n^{-2}\left(1-\frac{4}{5n}+ \frac{7}{25n} + \frac{2}{125n^3} + o\left(\frac{1}{n^3}\right)\right), 
\]
where $C$ is a certain constant. Furthermore, we obtain bounds
for $r_n = a_{n+1}/a_n$:
\[
f_n \le r_n \le g_n, \quad \forall\, n\geqslant{414},
\]
with
\[
f_n = 32-\frac{64}{n}+\frac{603}{5n^2}, \quad g_{n}=32-\frac{64}{n}+\frac{613}{5n^2}.
\]

Next, we will show that $h_n = 32^n/n$ is an upper bound for $a_n$ when $n \ges 414$ by induction.  Initially, one can check that the inequality holds for $n = 414$. Assume it holds for $a_n$, i.e., $a_n \le  h_n$. Notice that
\[
a_{n+1} = a_n r_n \le h_n g_n
\]
and
\[
\frac{h_{n+1}  - h_n g_n }{h_n}  = \frac{160n^2-293n-613}{5n^2(n+1)} > 0, \quad \forall\, n\ges 4.
\]
We thus derive that $a_{n+1} \le h_{n+1}$.

Now we will show that
\[
D(n) = 2 \log h_n + n(n+1) \log g_{n+1} - n (n+3) \log f_n \le 0, \quad n \ges 19.
\]
By {\tt Maple}, we compute that
\[
\lim_{n \to +\infty} D(n) = - \infty, \quad \lim_{n \to +\infty} D'(n) = \lim_{n \to +\infty} D''(n) = 0,
\]
and
{\small 
\begin{align*}
D'''(n) &= - (81527952168634716 + 155552087844136386 n + 10463853565472148 n^2 \\
		&+ 181717579054720476 n^3 - 30411335131752960 n^4 + 384356368351451520 n^5 \\
		&+ 197442488881497600 n^6 + 116628917999616000 n^7 + 79843811622912000 n^8 \\
		&+ 49427913441280000 n^9 - 24866003025920000 n^{10} + 7604118487040000 n^{11}\\
		&- 4945084416000000 n^{12} - 974336819200000 n^{13} + 67108864000000 n^{14})\\
		&/(n^3 (1 + n)^2 (453 + 160 n^2)^3 (603 - 320 n + 160 n^2)^3).
\end{align*}
}\\
By calculating the maximum root of the numerator in the expression for $D'''(n)$, we determine that $D'''(n)<0$ when $n \geqslant 19$. Consequently, $D''(n)>0$ and $D'(n)<0$ for $n \geqslant 19$.  It is straightforward to check that $D(19)<0$, implying that $D(n)<0$ for $n \geqslant 19$. By examining the initial terms, we  derive that $D_{n}<0$ for $n \geqslant 16$.

In summary, we have proved that the root sequence of $\{f_n^{(5)}\}_{n \ges 414}$ is log-concave. By further examination of the initial values, we see that the root sequence of $\{f_n^{(5)}\}_{n\geqslant{1}}$ is log-concave, completing the proof.

We have implement a \texttt{Mathematica} package {\tt P-rec.m} (which is available at \cite{pre}) to do the above computations. Utilizing this package, we confirm Conjecture 3.10 in~\cite{sun2013conjectures} for $r=3, 4, 5$. We also reprove the log-concavity of the root sequences of the following sequences: $\{S_{n}\}_{n\geqslant{1}}$ in~\cite{sun2016proof},
$\{{R_{n}}\}_{n\geqslant{5}}$ in~\cite{sun2018ratio}, $\{{a_{n}}\}_{n\geqslant{2}}$ and $\{{b_{n}}\}_{n\geqslant{1}}$ in~\cite{zhang2021proof},  the Catalan-Larcombe-French sequence $\{{P_{n}}\}_{n\geqslant{1}}$ in~\cite{zhao2016sun}, the Zagier numbers $\{{Z_n}\}_{n \geqslant 1}$, the Ap\'{e}ry numbers $\{{A_n}\}_{n \geqslant 1}$, the Domb numbers $\{{D_n}\}_{n \geqslant 1}$, the Motzkin numbers $\{{M_{n}}\}_{n \geqslant 1}$, the Cohen-Rhin numbers $\{{U_n}\}_{n \geqslant 1}$ in~\cite{xia2018log}, and the sequences in~\cite{chen2014infinitely} and~\cite{luca2012some}.

Replacing $a_n$ by $a_n/n^\alpha$, we can find the explicit $N_{\alpha}$ such that the root sequence of $\{{a_n / n^{\alpha}}\}_{n \geqslant N_{\alpha}}$ is log-concave. Here is an example.

\begin{exam}
Let $C_{n} = {2n \choose n}/(n+1)$ be the Catalan numbers and  $a_n=1/C_n$ (see \cite{hou2021log}). Then the root sequence of $\{{a_n / n^{2}}\}_{n \geqslant 6}$ is log-concave.
\end{exam}

Let $b_n=a_n/n^2$. It is straightforward to see that
\[
(4n+2)(n+1)^2b_{n+1}-n^2(n+2)b_{n}=0.
\]
By the package {\tt P-rec.m}, we find that
\[
b_{n}\sim C\cdot n^{-\frac{1}{2}}4^{-n} \left(1+\frac{9}{8n}+\frac{17}{128n^2}+\frac{3}{1024n^3}+o\left(\frac{1}{n^3}\right)\right),\]
for some constant $C$. Noting that
\[
r_n = \frac{b_{n+1}}{b_n} = \frac{n^2(n+2)}{2(2n+1)(n+1)^{2}},
\]
we may take $f_n=g_n=r_n$ as the lower and upper bounds for $r_n$.
Moreover, for $n \geqslant 2$, we have an upper bound for $b_n$:
\[h_n=\frac{4^{-n}}{n^{{1}/{4}}}.\]
This can be shown by noting 
\[
\lim_{n \to +\infty} D(n) = 0, \quad D'(n) = -\frac{2n^2+23n+14}{4n(2n^3+7n^2+7n+2)} < 0,
\]
where
\[
D(n) = \log(h_{n+1}) - \log(g_n h_n).
\]
Similarly, we can verify that  
\[
D(n) = 2 \log h_n + n(n+1) \log g_{n+1} - n (n+3) \log f_n \le 0, \quad n \ges 26.
\]
Finally, by checking initial values, we find that $\{\sqrt[n]{b_n}\}_{n \geqslant 20}$ is log-concave.

Similar to Theorem~\ref{3t1}, we can establish a criterion for the ratio log-convexity of the root sequence $\{\sq\}_{n \geqslant N}$. We will omit the proof as it closely parallels that of Theorem~\ref{3t1}. 
\begin{theorem}\label{t-rN}
Let $\{a_n\}_{n\geqslant{1}}$ be a positive sequence. Suppose we can find an upper bound $h_n$ of $a_n$, a lower bound $f_n$ and an upper bound $g_n$ of  $\frac{a_{n+1}}{a_n}$, such that for $n \ges N$
\[
	6 \log h_n  + (n^2-n)(2n+5) \log g_n - (n^2+n)(n+2) \log f_{n-1} - (n^3-n) \log f_{n+1}  \les 0.
\]
Then the root sequence $\{\sq\}_{n\geqslant{N}}$ is ratio log-convex.
\end{theorem}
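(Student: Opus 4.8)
The plan is to mirror the proof of Theorem~\ref{3t1}, reducing the claimed inequality to the ratio log-convexity condition $\Delta' \les 0$ already extracted in the proof of Theorem~\ref{t3}, where
\[
\Delta' = -\frac{\log a_{n+2}}{n+2} + 3\frac{\log a_{n+1}}{n+1} - 3\frac{\log a_n}{n} + \frac{\log a_{n-1}}{n-1}.
\]
By definition, the ratio log-convexity of $\{\sq\}_{n \ges N}$ is exactly the assertion that $\Delta' \les 0$ for all admissible $n$, so it suffices to bound $\Delta'$ from above in terms of the prescribed bounds.

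First I would introduce $r_n = a_{n+1}/a_n$ and eliminate the shifted terms via $\log a_{n+1} = \log a_n + \log r_n$, $\log a_{n+2} = \log a_n + \log r_n + \log r_{n+1}$, and $\log a_{n-1} = \log a_n - \log r_{n-1}$. Substituting these into $\Delta'$ and multiplying through by the positive quantity $n(n-1)(n+1)(n+2)$, the four occurrences of $\log a_n$ collapse into a single term with coefficient $6$, while the ratio terms acquire precisely the polynomial coefficients displayed in the statement. Concretely, this gives
\[
n(n-1)(n+1)(n+2)\,\Delta' = 6 \log a_n + (n^2-n)(2n+5)\log r_n - (n^2+n)(n+2)\log r_{n-1} - (n^3-n)\log r_{n+1}.
\]

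Next I would replace each quantity by the appropriate bound, with the choice dictated by the sign of its coefficient for $n \ges 2$. The coefficients $6$ and $(n^2-n)(2n+5)$ are positive, so I use the upper bounds $a_n \les h_n$ and $r_n \les g_n$; the coefficients $-(n^2+n)(n+2)$ and $-(n^3-n)$ are negative, so I use the lower bounds $r_{n-1} \ges f_{n-1}$ and $r_{n+1} \ges f_{n+1}$. This bounds the right-hand side above by exactly
\[
6 \log h_n + (n^2-n)(2n+5)\log g_n - (n^2+n)(n+2)\log f_{n-1} - (n^3-n)\log f_{n+1},
\]
which is $\les 0$ for $n \ges N$ by hypothesis. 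Since $n(n-1)(n+1)(n+2) > 0$, we conclude $\Delta' \les 0$, which is the ratio log-convexity of $\{\sq\}_{n \ges N}$.

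The only delicate point is the algebraic bookkeeping in the second step: one must verify that the $\log a_n$ contributions collapse to coefficient exactly $6$ after clearing denominators (the combination $-\tfrac{1}{n+2}+\tfrac{3}{n+1}-\tfrac{3}{n}+\tfrac{1}{n-1}$ simplifies to $\tfrac{6}{n(n-1)(n+1)(n+2)}$), and that each ratio coefficient matches the polynomial in the statement and keeps a constant sign for $n \ges 2$. These are routine telescoping and partial-fraction computations, so I expect no genuine obstacle beyond care in the arithmetic; this is presumably why the authors omit the proof as ``closely parallel'' to that of Theorem~\ref{3t1}.
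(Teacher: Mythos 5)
Your proof is correct and is essentially the argument the paper intends: the paper omits the proof of Theorem~\ref{t-rN} precisely because it parallels Theorem~\ref{3t1}, and your reduction---writing $\Delta'$ in terms of $r_n = a_{n+1}/a_n$, clearing the denominator $n(n-1)(n+1)(n+2)$ to get the coefficient $6$ on $\log a_n$ and the stated polynomial coefficients on the ratio terms, then bounding each term according to the sign of its coefficient---is exactly that parallel argument. The algebraic identities you flag as the delicate point do check out, so there is no gap.
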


We give an example to illustrate the application of Theorem~\ref{t-rN}.
\begin{exam}
Let $M_{n}$ be the Motzkin numbers which satisfy 
\[(n + 4)M_{n+2} - (2n + 5)M_{n+1} - 3(n + 1)M_n = 0, \quad n\geqslant 0.\]
Then the root sequence $\{\sqrt[n]{M_n}\}_{n\geqslant 1}$ is ratio log-convex.
\end{exam}

By the package {\tt P-rec.m}, we find that
\[M_{n} \sim C\cdot 3^{n} n^{-3/2} \left(1-\frac{39}{16n}+\frac{2665}{512n^2}-\frac{87885}{8192n^3}+o\left(\frac{1}{n^3}\right)\right),\]
for some constant $C.$ Moreover, for $n\geqslant{228}$ we have the lower and upper bound of $r_n = M_{n+1}/M_n$: 
\[f_n = 3-\frac{9}{2n}+\frac{207}{16n^2}-\frac{157}{4n^3},\]
and 
\[g_n = 3-\frac{9}{2n}+\frac{207}{16n^2}-\frac{149}{4n^3}.\]

Now we take 
\[h_{n}=\frac{3^{n}}{n}.\] 
It can be verified that for $n\geqslant{15},$ 
\[
   	6 \log h_n  + (n^2-n)(2n+5) \log g_n - (n^2+n)(n+2) \log f_{n-1} - (n^3-n) \log f_{n+1}  \les 0.
\]
Finally, by checking initial values, we derive that $\{\sqrt[n]{M_n}\}_{n\geqslant{1}}$ is ratio log-convex. 

In a similar way, we show that the root sequences of the following combinatorial sequences are all ratio log-convex: 
Fine numbers $\{{f_n}\}_{n\geqslant 4}$, the central Delannoy numbers $\{{D_{n}}\}_{n\geqslant 1}$, the Domb numbers $\{{D_{n}}\}_{n\geqslant 1}$, the numbers of tree-like polyhexes with $n+1$ hexagons $\{{t_{n}}\}_{n\geqslant{1}}$, and the Catalan-Larcombe-French sequence $\{{P_{n}}\}_{n\geqslant 1}$.


\begin{thebibliography}{10}

\bibitem{pre}
\url{ http://faculty.tju.edu.cn/HouQinghu/en/index.htm}.

\bibitem{chen2014infinitely}
W.Y.C. Chen, J.J.F. Guo and L.X.W. Wang, Infinitely log-monotonic combinatorial sequences, Adv. Appl. Math. \textbf{52} (2014) 99--120.

\bibitem{Ana}
P. Flajolet and R. Sedgewick, Analytic combinatorics, Cambridge University Press, Cambridge, 2009.

\bibitem{hou2021log}
Q.-H. Hou and G. Li, Log-concavity of $P$-recursive sequences, J. Symbolic Comput. \textbf{107} (2021) 251--268.

\bibitem{hou2012monotonicity}
Q.-H. Hou, Z.-W. Sun and H. Wen, On monotonicity of some combinatorial sequences, Publ. Math. Debrecen \textbf{85} (2014) 285--295.

\bibitem{hou2019asymptotic}
Q.-H. Hou and Z.-R. Zhang, Asymptotic $r$-log-convexity and $P$-recursive sequences, J. Symbolic Comput. \textbf{93} (2019) 21--33.

\bibitem{liu2018new}
R.-L. Liu and F.-Z. Zhao, New sufficient conditions for log-balancedness, with applications to combinatorial sequences, J. Integer Seq. \textbf{21} (2018), Article 18.5.7.

\bibitem{luca2012some}
F. Luca and P. St$\breve{a}$nic$\breve{a}$, On some conjectures on the monotonicity of some arithmetical sequences, J. Combin. Number Theory \textbf{4} (2012) 1--10.

\bibitem{perlstadt1987some}
M.A. Perlstadt, Some recurrences for sums of powers of binomial coefficients, J. Number Theory \textbf{27} (1987) 304--309.

\bibitem{sun2018ratio}
B.Y. Sun, On a ratio monotonicity conjecture of a new kind of numbers, J. Inequal. Appl. \textbf{2018} (2018) 24--33.

\bibitem{sun2016proof}
B.Y. Sun, Y. Hu and B. Wu, Proof of a conjecture of Z-W Sun on ratio monotonicity, J. Inequal. Appl. \textbf{2016} (2016) 272--280.

\bibitem{sun2020log}
B.Y. Sun and J.J.-Y. Zhao, Log-behavior of two sequences related to the elliptic integrals,
Acta Math. Appl. Sin. Engl. Ser. \textbf{36} (2020) 590--602.

\bibitem{sun2013conjectures}
Z.-W. Sun, Conjectures involving arithmetical sequences, Number Theory: Arithmetic in Shangri-La (eds., S. Kanemitsu, H. Li and J. Liu), Proc. 6th China-Japan Seminar (Shanghai, August 15-17, 2011), World Sci., Singapore, 2013, pp. 244--258.

\bibitem{sun2021new}
Z.-W. Sun, New conjectures in number theory and combinatorics, Harbin Institute of Technology Press, Harbin, 2021.

\bibitem{wimp1985resurrecting}
J. Wimp and D. Zeilberger, Resurrecting the asymptotics of linear recurrences,
 J. Math. Anal. Appl. \textbf{111} (1985)  162--176.

\bibitem{xia2018log}
E.X.W. Xia, On the log-concavity of the sequence $\{\sqrt[n]{S_n}\}_{n=1}^{\infty}$ for some combinatorial sequences $\{S_n\}_{n=0}^{\infty}$, Proc. Roy. Soc. Edinburgh Sect. A \textbf{148} (2018) 881--892.

\bibitem{zhang2021proof}
E.X.W. Xia and Z.-R. Zhang, On the log-concavity of $n$-th root of a sequence, arXiv:2112.12427, 2022.

\bibitem{zhao2016sun}
J.J.Y Zhao, Sun's log-concavity conjecture on the Catalan--Larcombe--French sequence, Acta Math. Sin. (Engl. Ser.) \textbf{32} (2016) 553--558.

\end{thebibliography}

\end{document}